\newcommand{\CM}{Cohen-Macaulay}
\newcommand{\wrt}{with respect to}
\newcommand{\n}{\mathfrak{n} }
\newcommand{\m}{\mathfrak{m} }
\newcommand{\M}{\mathfrak{M} }
\newcommand{\q}{\mathfrak{q} }
\newcommand{\Z}{\mathbb{Z} }
\newcommand{\rt}{\rightarrow}
\newcommand{\ov}{\overline}
\newcommand{\bx}{\mathbf{x}}
\newcommand{\vdim}{\operatorname{vdim}}
\newcommand{\depth}{\operatorname{depth}}
\newcommand{\type}{\operatorname{type}}
\newcommand{\Ass}{\operatorname{Ass}}
\newcommand{\Ext}{\operatorname{Ext}}
\theoremstyle{plain}
\newtheorem{theorem}{Theorem}[section]
\newtheorem{lemma}[theorem]{Lemma}
\newtheorem{proposition}[theorem]{Proposition}
\theoremstyle{definition}
\newtheorem{remark}[theorem]{Remark}
\newtheorem{example}[theorem]{Example}
\theoremstyle{remark}
\begin{document}

\title[$e_1 = e + 2$]{Cohen-Macaulay local rings with $e_1 = e + 2$}
\author{Tony~J.~Puthenpurakal}
\date{\today}
\address{Department of Mathematics, IIT Bombay, Powai, Mumbai 400 076}

\email{tputhen@math.iitb.ac.in}
\keywords{associated graded rings, Hilbert coefficients,  superficial elements, Ratliff-Rush filtration }
\subjclass{ Primary 13A30, Secondary 13D40, 13H10}
 \begin{abstract}
In this paper we determine the possible Hilbert functions of a \CM \ local ring  of dimension $d$, multiplicity $e$ and first Hilbert coefficient $e_1$ in the case $e_1 = e + 2$.
\end{abstract}
 \maketitle
\section{introduction}
Let $(A,\m)$ be a Noetherian local ring of dimension $d$. If $M$ is an $A$-module then let $\ell(M)$ denote its length and $\mu(M)$ the number of its minimal generators. The function $H^{(1)}(A, n) = \ell(A/\m^{n+1})$ is called the \emph{Hilbert-Samuel function} of $A$ (with respect to $\m$). It is well known that there exists a polynomial $P_A(z) \in \mathbb{Q}[z]$ of degree $d$ such that $H^{(1)}(A, n) = P_A(n)$ for all $n \gg 0$. 
 The polynomial $P_A(z)$ is called the \emph{Hilbert-Samuel polynomial} of $A$.  We write
$$ P_A(z) = \sum_{i = 0}^{d}(-1)^ie_i(A)\binom{z+ d -i}{d-i}.$$
The integers $e_i(A)$ are called the \emph{$i^{th}$-Hilbert coefficient} of $A$. The zeroth Hilbert coefficient $e_0(A)$ is called the \emph{multiplicity} of $A$. 
We set $e_i = e_i(A)$ for all $i$ and $e = e_0$.

The graded ring $G(A) = \bigoplus_{n \geq 0}\m^n/\m^{n+1}$ is called the associated graded ring of $A$(with respect to $\m$). It's Hilbert series
$$H_A(z) = \bigoplus_{n \geq 0} \ell(\m^n/\m^{n+1})z^n  = \frac{h_A(z)}{(1-z)^d} \quad  \text{where} \ h_A(z) \in \mathbb{Z}[z].$$
If $f$ is a polynomial we use $f^{(i)}$ to denote the $i$-th formal derivative of $f$. It is
easy to see that $e_{i} = h_A(z)^{(i)}(1)/i! $ for $0 \leq i \leq d$. It is also 
convenient to set
$e_{i} = h_A(z)^{(i)}(1)/i! $ for all $i\geq 0$.  Let $h = \mu(\m) - d$ be the embedding co-dimension of $A$. It is easily seen  that $h_A(z) = 1 + hz + \text{higher terms}$. We call $h_A(z)$ the \emph{h-polynomial} of $A$. The function $H(A,n) = \ell(\m^n/\m^{n+1})$ is called the \emph{Hilbert function}  
of $A$ (with respect to $\m$).

Now assume that $A$ is \CM. Then Abhyankar proved that $e \geq h + 1$, see \cite{A}. Northcott proved that $e_1 \geq e_0 - 1$, see \cite{N}.
Itoh proved that $e_2 \geq e_1 - e + 1$, see \cite[Theorem 12]{It}.
It was observed by Sally that when the Hilbert-coefficients satisfy border values then it forces $G(A)$ to have high depth and also forces the $h_A(z)$
to have a prescribed form; see \cite{S1}, \cite{S2}, \cite{S3} and \cite{S4}. See the nice survey article \cite{V} for an introduction to this area of research. In particular see \cite{V} for classification of Hilbert functions when $e = h+1, h+2$ and $e_1 = e-1, e, e + 1$. Let $\type(A) = \ell(\Ext^d_A(k, A))$ be the \CM-type of $A$.
For classification of Hilbert functions when $e = h + 3$ and $\type(A) < h$, see \cite{RV}.

In this paper we describe all Hilbert functions that can possibly occur if $e_1 = e + 2$. It turns out that in the process we also have to describe
Hilbert functions that can occur when $e_2 = e_1 - e + 1 = 3$. It was conjectured by Valla that if $e_2 = e_1 - e + 1$ then $G(A)$ is \CM \ and 
$\deg h_A(z) \leq 2$.  This conjecture is true when $\dim A \leq 1$. 
 A counter-example  to this conjecture was found by Wang in the case $e_2 = e_1 - e + 1 = 3$  and $\dim A = 2$ ; see \cite[3.10]{CPR}. 
We prove
\begin{theorem}\label{e2}
Let $(A,\m)$ be a \CM \ local ring  of dimension $d$. Set $G = G(A)$. If $e_2 = e_1 - e + 1 = 3$ then one of the following cases occur
\begin{enumerate}[\rm (i)]
\item
$e = h + 4$, $G$ is \CM \  and $h_A(z) = 1 + hz + 3z^2$.
\item
$e = h + 3$, $d \geq 2$, $\depth G = d - 2$, $\type(A) = h$ and 

$\displaystyle{h_A(z) = 1 + hz + 3z^3 - z^4.}$
\end{enumerate} 
\end{theorem}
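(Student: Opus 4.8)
The plan is to cut $A$ down by a superficial sequence to a ring of dimension at most $2$, analyse that ring through its Ratliff--Rush filtration (equivalently, its Sally module, whose length is forced to be $3$), and transport the conclusion back to $A$ with Sally's machine.

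\smallskip\noindent\emph{Reductions.} We may assume $A/\m$ infinite. If $d\ge 2$, pick a superficial sequence $x_1,\dots,x_{d-2}$ and set $A'=A/(x_1,\dots,x_{d-2})$, a two-dimensional \CM\ ring; this leaves $h$, $e_0,e_1,e_2$ and $\type$ unchanged, and by Sally's machine (with $\depth G(C/xC)\ge\depth G(C)-1$) one gets $\depth G(A)=(d-2)+\depth G(A')$ if $\depth G(A')\ge 1$ and $\depth G(A)\le d-2$ if $\depth G(A')=0$. Cutting $A'$ by one more superficial element $x$ produces a one-dimensional \CM\ ring $B=A'/xA'$ with $e_0(B)=e$, $e_1(B)=e_1$, $h(B)=h$, $\type(B)=\type(A')$, and
\[ h_B(z)=h_{A'}(z)+(1-z)^2W(z),\qquad W(z)=\sum_{n\ge1}\ell\bigl((\m_{A'}^{n+1}:x)/\m_{A'}^{n}\bigr)z^n , \]
a polynomial with non-negative coefficients that vanishes exactly when $\depth G(A')\ge 1$; in particular $e_2(B)=3+W(1)\ge 3$.

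\smallskip\noindent\emph{The one-dimensional picture.} Let $C$ be one-dimensional \CM\ with superficial element $x$ and $e_1(C)-e_0(C)+1=3$ (equivalently, Sally module $\bigoplus_{n\ge1}\m^{n+1}/x\m^{n}$ of length $3$). Its Ratliff--Rush filtration $\{\widetilde{\m^{n}}\}$ has the same Hilbert polynomial as $C$, and $G':=\bigoplus_n\widetilde{\m^{n}}/\widetilde{\m^{n+1}}$ is \CM; hence $h_{G'}(z)$ is a non-negative $O$-sequence with $\sum_{i\ge2}(i-1)[h_{G'}]_i=3$, and Macaulay's growth bound (forbidding $[h_{G'}]_2=0<[h_{G'}]_4$) leaves exactly
\[ h_{G'}(z)\in\bigl\{\,1+cz+3z^2,\ \ 1+cz+z^2+z^3\,\bigr\}. \]
Moreover $h_C(z)=h_{G'}(z)+(1-z)^2V(z)$ with $V(z)=\sum_{n\ge1}\ell\bigl(\widetilde{\m^{n+1}}/\m^{n+1}\bigr)z^n$ non-negative; the Sally module having length $3$ forces $\widetilde{\m^{n}}=\m^{n}$ for $n\ge 4$, so $\deg V\le 2$ and $\deg h_C\le 4$, while $e_2(C)$ equals $3+V(1)$ or $4+V(1)$ and the embedding codimension of $C$ is $c+v_1$, where $v_1$ is the coefficient of $z$ in $V$. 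In particular, if $e_2(C)=3$ then $V=0$ and $h_{G'}(z)=1+cz+3z^2$, so $C$ is Ratliff--Rush closed, $G(C)$ is \CM\ with $h_C(z)=1+cz+3z^2$ ($c=$ embedding codimension) and $e_0(C)=c+4$.

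\smallskip\noindent\emph{Case (i).} If $\dim A\le 1$, apply the previous paragraph to $C=A$ (with $e_2(C)=3$; the case $\dim A=0$ is immediate, the $h$-vector of $A$ being itself non-negative): $G(A)$ is \CM, $h_A(z)=1+hz+3z^2$, $e=h+4$. If $\dim A=2$ and $\depth G(A')\ge 1$, choose $x$ with $x^{*}$ a non-zerodivisor on $G(A')$; then $W=0$, $e_2(B)=3$, $h_B=h_{A'}$, and the previous paragraph applied to $C=B$ gives $h_B(z)=1+hz+3z^2$ with $G(B)=G(A')/x^{*}G(A')$ \CM. Hence $G(A')$ is \CM\ with $h_{A'}(z)=1+hz+3z^2$ and $e=h+4$, and Sally's machine carries this back to $A$: case (i).

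\smallskip\noindent\emph{Case (ii) and the main obstacle.} There remains $\dim A=2$, $\depth G(A')=0$ (so $\depth G(A)\le d-2$; the reverse inequality, hence $\depth G(A)=d-2$, will follow from the shape of $h_A$ below via a depth--Hilbert-function estimate). Now $W\ne 0$, and $e_2(A')=3$ gives $W(1)=e_2(B)-3\ge 1$. Feeding the one-dimensional picture for $B$ into $h_{A'}=h_B-(1-z)^2W$, the goal is $h_{A'}(z)=1+hz+3z^3-z^4$ and $e=h+3$. The first difficulty is to bound $\deg h_{A'}$ (equivalently $\deg W$); granting such a bound---which should follow from the smallness of $e_1-e=2$, e.g.\ via the Sally module of $A'$, or via the Ratliff--Rush filtration of $A'$, whose associated graded ring has positive depth and, one verifies, $h$-polynomial $1+(h-1)z+3z^2$---a short comparison of coefficients using $[h_{A'}]_1=h$, $e_2(A')=3$ and $W\ge 0$ pins $h_{A'}(z)$ down and shows the Ratliff--Rush defect of $A'$ is precisely $\ell(\widetilde{\m^2}/\m^2)=1$ with $\widetilde{\m^n}=\m^n$ for $n\ge 3$. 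The harder---and, I expect, decisive---obstacle is the equality $\type(A)=\type(A')=h$: the degree-$4$ coefficient $-1$ together with $\depth G(A')=0$ rigidify $H^0_{\M}(G(A'))$ and $H^1_{\M}(G(A'))$, and one must read off from this the number of minimal generators of the graded canonical module of $G(A')$---or invoke a Sally-type exact sequence relating $\Ext^2_{A'}(k,A')$ to the associated-graded data---to conclude it is $h$. Transporting everything back through $x_1,\dots,x_{d-2}$ completes case (ii).
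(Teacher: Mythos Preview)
Your reduction to dimension two and the Ratliff--Rush analysis there are essentially the paper's Proposition~\ref{dim2}, and the one-dimensional picture is correct. The type computation you flag as the ``decisive obstacle'' is in fact short: in the two-dimensional ring $A'$ with $\depth G(A')=0$, if $\type(A')<h$ then \cite[2.3]{RV} forces $\ell(\m^{3}/J\m^{2})\le 1$ and Huckaba's criterion \cite{H} gives $\depth G(A')\ge 1$, a contradiction; if $\type(A')>h$ one passes to the Artinian quotient $C=A'/J$ with $h_C(z)=1+hz+2z^2$ and checks that $\type(C)=h+1$ would force $\mu(\q^{2})=1$, contradicting $\ell(\q^{2})=2$.

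The genuine gap is your parenthetical claim that ``the reverse inequality, hence $\depth G(A)=d-2$, will follow from the shape of $h_A$ below via a depth--Hilbert-function estimate.'' No such estimate exists in the direction you need: the $h$-polynomial does not bound $\depth G$ from below, and in any case you have only computed $h_{A'}$, not $h_A$---the two agree on $e_0,e_1,e_2$ but a priori differ in degree and in all higher coefficients. Sally descent through $d-2$ superficial elements yields only $\depth G(A)\le d-2$ when $\depth G(A')=0$; to obtain $\depth G(A)\ge d-2$ you must stop one step earlier, at the three-dimensional slice $A''=A/(x_1,\dots,x_{d-3})$, and prove $\depth G(A'')\ge 1$. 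This is the crux of the paper (Theorem~\ref{depth0}) and it is not soft: assuming $\depth G(A'')=0$, one shows $\widetilde{G}(A'')$ is \CM\ so $G(A'')$ is generalized \CM, uses the structure of $\Ass G(A'')$ to produce a pencil of superficial elements $u-\lambda v$ and thereby force $\widetilde{\m^3}=\m^3$; an exact-sequence count then gives $\ell(\m^3/J\m^2)=3$ while $\ell(\m^2/J\m)=2$, so the fibre $G(A'')/(x^*,y^*,z^*)$ is a standard graded $k$-algebra with Hilbert function $(1,h,2,3)$, violating Macaulay's bound $2^{\langle 2\rangle}=2<3$. Your outline omits this argument entirely, and without it the conclusions $\depth G(A)=d-2$ and $h_A(z)=1+hz+3z^3-z^4$ remain unproved for $d\ge 3$.
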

 Note that in Theorem \ref{e2} the $h$-polynomial of $A$ completely determines \\ $\depth G(A)$. In section 7 we give examples of \CM \ local rings 
having Hilbert functions as described in \ref{e2}.

As a consequence of Theorem \ref{e2} we can completely classify Hilbert functions that can occur when $e_1 = e + 2$.
\begin{theorem}\label{e1}
Let $(A,\m)$ be a \CM \ local ring  of dimension $d$. Set $G = G(A)$. If $e_1 = e + 2$ then one of the following cases occur
\begin{enumerate}[\rm (i)]
\item 
$e = h + 2$, $\type(A) = h$, $\depth G = d - 1$ and $h_A(z) = 1 + hz + z^4$.
\item
$e = h + 4$, $G$ is \CM \  and $h_A(z) = 1 + hz + 3z^2$.
\item
$e = h + 3$, $e_2 = 4$, $\depth G \geq d - 1$ and $h_A(z) = 1 + hz + z^2 + z^3$.
\item
$e = h + 3$, $e_2 = 3$, $d \geq 2$, $\depth G = d - 2$, $\type(A) = h$ and 

$\displaystyle{h_A(z) =  1 + hz + 3z^3 - z^4.}$
\end{enumerate} 
\end{theorem}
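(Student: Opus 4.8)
The plan is to reduce to $\dim A = 1$ by repeatedly cutting down by a superficial element, to dispose of the borderline value $e_2 = 3$ by quoting Theorem~\ref{e2}, and to handle the remaining values of $e_2$ by a direct analysis of the one-dimensional case via the Ratliff-Rush filtration. Since $e_1 = e+2$ forces $e_1 - e + 1 = 3$, Itoh's inequality gives $e_2 \geq 3$. After replacing $A$ by its completion and, if necessary, enlarging the residue field, choose a superficial element $x$; then $e$, $e_1$ and the embedding codimension $h$ are unchanged in $A/xA$, and Sally's machine controls how $\depth G$ changes. Thus by induction on $d$ it suffices to understand the case $\dim A = 1$, the only delicate point in the reduction being that $e_2$ and the shape of $h_A(z)$ need not survive the passage from dimension $2$ to dimension $1$ when $\depth G = 0$; there one replaces the $\m$-adic filtration by the Ratliff-Rush filtration, for which the Hilbert polynomial, and hence every $e_i$, is preserved under a superficial reduction.

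In dimension one I would work throughout with the Ratliff-Rush filtration $\mathcal{F} = \{\widetilde{\m^n}\}$, so that $G_{\mathcal{F}}(A)$ is Cohen-Macaulay and shares the Hilbert polynomial of $G(A)$; hence its numerator $\widetilde h_A(z) = 1 + c_1 z + \sum_{i \geq 2} c_i z^i \in \mathbb{Z}[z]$ satisfies $\widetilde h_A(1) = e$, $\widetilde h_A^{(1)}(1) = e+2$ and $\widetilde h_A^{(2)}(1)/2 = e_2$. The first two relations give $\sum_{i \geq 2}(i-1)c_i = 3$; feeding this back together with the positivity restrictions available for a one-dimensional Cohen-Macaulay graded algebra and its $a$-invariant should force $e_2 \in \{3,4,6\}$ and leave only a very short list of candidates for $\widetilde h_A(z)$ (morally $1+hz+3z^2$, $1+hz+z^2+z^3$, and a polynomial realizing $e_2 = 6$). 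If $e_2 = 3$ we invoke Theorem~\ref{e2} to obtain cases (ii) and (iv). If $e_2 = 4$ or $e_2 = 6$ one must pass from $\widetilde h_A(z)$ back to the genuine $\m$-adic $h_A(z)$ by controlling the graded module $\bigoplus_n \widetilde{\m^n}/\m^n$: in the first case this recovers $h_A(z) = 1+hz+z^2+z^3$ with $\depth G \geq d-1$, and in the second $h_A(z) = 1+hz+z^4$ with $\depth G = d-1$ exactly --- the latter because $G(A)$ cannot be Cohen-Macaulay when its $h$-vector $(1,h,0,0,1)$ fails to be an $O$-sequence.

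It then remains to read off $\type(A)$ in cases (i) and (iv): here $e = h+2$ (respectively $e = h+3$ with $e_2 = 3$) together with the now-known form of $h_A(z)$ pins down the last nonzero homogeneous component of the canonical module and yields $\type(A) = h$. Lifting back to dimension $d$ is then formal: in the cases under discussion one has $\depth G(A) \geq d-1 \geq 1$ as soon as $d \geq 2$, so a superficial element $x$ is a nonzerodivisor on $G(A)$, $G(A/xA) = G(A)/x^{*}G(A)$, and both $h_A(z)$ and all the $e_i$ transfer unchanged, while Sally's machine recovers the exact value of $\depth G(A)$.

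I expect the two genuinely hard points to be the following. First, showing that the numerical data really do force the short list above --- in particular ruling out $e_2 = 5$ and $h$-polynomials of degree greater than $4$ --- which requires Itoh's bound together with the $O$-sequence constraints on the (non-standard-graded) Ratliff-Rush associated graded ring, handled with some care. Second, recovering the honest $\m$-adic Hilbert function, the depth of $G(A)$, and $\type(A)$ in the cases where $\depth G(A) < \dim A$, since there the two filtrations genuinely differ and one needs quantitative control on $\widetilde{\m^n}/\m^n$. The dimension-two-to-one transfer of $e_2$ in the residual $\depth G = 0$ situation is a further technical nuisance, but it arises only inside the case $e_2 = 3$, which is already handled by Theorem~\ref{e2}.
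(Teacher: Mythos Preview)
Your overall architecture --- reduce modulo a superficial sequence, invoke Theorem~\ref{e2} for the borderline $e_2 = 3$, and handle the rest by a low-dimensional classification --- matches the paper's. But two points diverge from the paper and one of them is a genuine gap.

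First, the paper does \emph{not} reduce all the way to dimension one. It treats $d=0$ and $d=1$ directly with the elementary invariants $\rho_n = \ell(\m^{n+1}/x\m^n)$ of \ref{dim1} (no Ratliff-Rush is used): the relation $\sum_{j\ge 1}\rho_j = 3$, together with the observations that $\rho_c = 0$ forces $\rho_{c+1}=0$ and that $\rho_1 \le 1$ forces all later $\rho_j \le 1$, already pins down the three $h$-polynomials and the values $e_2 \in \{3,4,6\}$. Then $d=2$ is the true base of the induction, and $d\ge 3$ is reduced to $d=2$ by going modulo $d-2$ superficial elements (which preserves $e_0,e_1,e_2$), followed by Sally descent and Theorem~\ref{e2}.

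Second --- and this is the gap --- your assertion that ``for the Ratliff-Rush filtration \ldots\ every $e_i$ is preserved under a superficial reduction'' is not correct as stated and cannot be used to push the base case down to $d=1$. The Ratliff-Rush filtration on $B = A/(x)$ is in general \emph{not} the image of the Ratliff-Rush filtration on $A$; establishing that it is, in the particular situation $e_2 = e_1 - e + 1 = 3$, is precisely the content of Proposition~\ref{dim2}(2)(a)--(g) and of Claims (1)--(5) inside Theorem~\ref{depth0}. What the paper actually does at the $d=2$, $e=h+3$ step is the reverse of your plan: it first reads off $e_2(B) = 4$ from the one-dimensional classification, then uses the exact formula $e_2(A) = e_2(B) - \sum_{n} \ell\bigl((\m^{n+1}:x)/\m^n\bigr)$ from \ref{mod-sup}, together with Itoh's bound $e_2(A)\ge 3$, to conclude $e_2(A)\in\{3,4\}$; the value $4$ forces $x^*$ to be $G(A)$-regular (case (iii)), while the value $3$ is handed to Theorem~\ref{e2} (case (iv)). For $e=h+4$ and $e=h+2$ the paper uses, respectively, Sally descent and the Rossi--Valla/Wang theorem on almost minimal multiplicity, again without any appeal to Ratliff-Rush.
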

In section 7 we give examples of \CM \ local rings 
having Hilbert functions as described in \ref{e1}.

We now describe in brief the contents of this paper. In section two we describe some preliminary results that we need. In section three we discuss the case of Theorem \ref{e2} when $d = 2$. The most difficult case of Theorem \ref{e2} is the case when $d = 3$. This is done in section four. 
In section five we prove Theorem \ref{e2}. We prove Theorem \ref{e1} in section six. Finally in section seven we give examples which illustrate our results. 
\section{preliminaries}
 In this section we discuss some preliminaries that we need. 
In this paper all rings are Noetherian and all modules are assumed to be finitely generated.
Let $(A,\m)$ be
 a local ring of dimension $d$ with residue field $k = A/\m$. 

\s If $a$ is a non-zero
 element of $A$ and if $j$ is the largest integer such that $a \in \m^{j}$,
then we let $a^*$ denote the image of $a$
 in $\m^{j}/\m^{j+1}$.

\s An element $x \in \m$ is said to be  \emph{$A$-superficial} if there exists $c$ and $n_0$ such that
$(\m^{n+1} \colon x)\cap \m^c  = \m^n$ for all $n \geq n_0$. Superficial elements exist when the residue field of $A$ is infinite. 
If $\depth A > 0$ and $x$ is $A$-superficial then $x$ is $A$-regular and furthermore $(\m^{n+1} \colon x) = \m^n$ for all $n \gg 0$.

 \begin{remark}
\label{Pr0}
 If the residue field of $A$ is finite then we resort to the
standard trick to replace $A$ by $A' = A[X]_S$ where $S =  A[X]\setminus \m A[X]$. The residue
field of $A'$ is $k(X)$, the field of rational functions over $k$.
Furthermore
\begin{equation*}
H(A',n) = H(A,n) \ \  \forall n \geq 0 \ \  \text{and} \  \   \depth G(A') = \depth G(A). 
 \end{equation*}
\end{remark}

\s \label{exist-sup} Assume the residue field of $A$ is infinite. We need to recall the construction of superficial element as it is used in the proof of  Theorem \ref{depth0}.  Set $G = G(A)$ and let
$\M$ be its maximal homogeneous ideal. Set $V$ to be the $k$-vector space $\m/\m^2$. If $P$ is a prime ideal of $G$ and $P \neq \M$ then note $P \cap V \neq V$. Then as $k$ is infinite we get
\[
V \neq \bigcup_{P \in \Ass{G}, P\neq \M} P \cap V.
\]
If $x \in \m$ is  such that 
\[
x^* \in V \setminus \bigcup_{P \in \Ass{G}, P\neq \M} P \cap V.
\]
then $x$ is $A$-superficial.

\s \label{mod-sup}Let $x \in \m$ be $A$-superficial and $A$-regular. Set $B = A/(x)$. Also set $b_n = \ell((\m^{n+1} \colon x)/\m^n)$ and
$b_x(z) = \sum_{n \geq 0}b_n z^n$. Then $b_x(z) \in \Z[z]$. Furthermore we have
\[
h_A(z) = h_B(z) - (1-z)^db_x(z).
\]
So $e_i(A) = e_i(B)$ for $0 \leq i \leq d - 1$. Also $x^*$ is $G(A)$-regular if and only if $b_x(z) = 0$. In this case we have $G(B) = G(A)/(x^*)$.

\s A
sequence $x_1,x_2,\ldots,x_r$ in a local ring $(A,\m)$ is said to
be an $A$-\emph{superficial sequence}  if $x_1$ is $A$-superficial
 and $x_i$ is  $A/(x_1,\ldots,x_{i-1})$-superficial for
$2\leq i \leq r$.

\s\label{SD} Assume $A$ is \CM \ and $x_1,\ldots, x_r$ is an $A$-superficial sequence with $r \leq d$. Set $B = A/(x_1,\ldots, x_r)$. Then
\begin{enumerate}
\item
$\depth G(A) \geq r $ if and only if $x_1^*,\ldots, x_r^*$ is $G(A)$-regular. In this case we have $G(B) = G(A)/(x_1^*,\ldots, x_r^*)$.
\item
(\emph{Sally descent:}) $\depth G(A) \geq r + 1$ if and only if $\depth G(B) \geq 1$.
\end{enumerate}

\s \label{SA} Let $(A,\m)$ be \CM \ and suppose $x_1,\ldots, x_d$ be a maximal $A$-superficial sequence. Set $J = (x_1,\ldots, x_d)$.
Then $e = h+ 1+ \ell(\m^2/J\m)$.

\s \label{dim1}
 Let  $\dim A = 1$  and $A$ is \CM. Let $x$ be $A$-superficial.
 Set $\rho_n = \lambda(\m^{n+1}/x\m^n)$.   We have
   \begin{enumerate} [\rm (1)]
    \item
    $ H(A,n) = e - \rho_n$.
    \item
    If $\deg h_A(z) = s$ then
   $h_A(z) = 1 + \sum_{i = 0}^{s}(\rho_{i-1} - \rho_{i})z^i$
    \item
    $ e_i = \sum_{j\geq i -1}\binom{j}{i-1}\rho_j \geq 0 $ for all $i \geq 1$.
   \end{enumerate}

\s Assume $\depth A > 0$. For $i \geq 1$ set
\[
\widetilde{\m^i} = \bigcup_{l \geq 1}(\m^{i+l} \colon \m^l).
\]
Then $\{ \widetilde{\m^n} \}_{n \geq 0}$ is called the Ratliff-Rush filtration of $A$ (with respect to $\m$). 
We have
\begin{enumerate} [\rm (1)]
    \item
$\widetilde{\m^i}\widetilde{\m^j} \subseteq \widetilde{\m^{i+j}}$ for all $ i, j \geq 0$.
\item
$\widetilde{\m^i} = \m^i$ for all $i \gg 0$.
\item
If $x$ is $A$-superficial then $(\widetilde{\m^{i+1}} \colon x) = \widetilde{\m^i}$ for all $i \geq 0$.
\item
$\depth G(A) > 0$ if and only if $\widetilde{\m^i} = \m^i$ for all $i \geq 1$.
\end{enumerate}

\s \label{ratliff-hilb} Let $\depth A > 0$. Let $\widetilde{G}(A) = \bigoplus_{n \geq 0}\widetilde{\m^n}/\widetilde{\m^{n+1}}$ be the associated graded ring 
of the Ratliff-Rush filtration. Let its Hilbert series be
\[
\sum_{n \geq 0}\ell\left( \widetilde{\m^n}/\widetilde{\m^{n+1}}\right)z^n = \frac{\widetilde{h}_A(z)}{(1-z)^d} \quad  \text{where} \ \widetilde{h}_A(z) \in \mathbb{Z}[z].
\]
Set $r(z) = \sum_{n \geq 0} \ell(\widetilde{\m^{n+1}}/\m^{n+1})z^n$. Then $r(z) \in \Z[z]$ and
\[
h_A(z) = \widetilde{h}_A(z) + (1-z)^{d+1}r(z).
\]

\s\label{bella} (\cite[Theorem 3]{It}) Let $(A,\m)$ be a \CM \ local ring of dimension two.  Suppose  $x,y$ is an $A$-superficial sequence. Set $J = (x,y)$. 
Let $\sigma_j = \ell(\widetilde{\m^{j+1}}/J\widetilde{\m^j})$ for $j \geq 0$. Then
\[
e_1 = \sum_{j \geq 0} \sigma_j  \quad \text{and} \  e_2 = \sum_{j \geq j} j\sigma_j. 
\]
Furthermore $\widetilde{h}_A(z) = 1 + \sum_{i \geq 1}(\sigma_{i-1} - \sigma_i)z^i$.

We will need the following result in the proof of Theorem \ref{depth0}.
\begin{proposition}\label{r-3}
Set $G = G(A)$ and $\widetilde{G} = \widetilde{G}(A)$.
 Assume $\widetilde{G}$ is \CM. Then
\begin{enumerate}[\rm (1)]
\item
$G$ is generalized \CM.
\item
$\dim G/P = d$ for all minimal primes $P$ of $G$.
\end{enumerate}
\end{proposition}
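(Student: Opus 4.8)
The plan is to compare $G$ with $\widetilde{G}$ through a single homomorphism of graded rings that is an isomorphism in all large degrees, and then to pull the \CM \ property of $\widetilde{G}$ back along it by a routine local cohomology computation. First I would observe that, since $\m^n \sub \widetilde{\m^n}$ for every $n$ and $\widetilde{\m^i}\widetilde{\m^j} \sub \widetilde{\m^{i+j}}$, the inclusions $\m^n \hookrightarrow \widetilde{\m^n}$ induce a homomorphism of graded $k$-algebras $\phi \colon G \to \widetilde{G}$; here $\widetilde{\m} = \m$, so $\phi$ is the identity in degree $0$ and is even onto in degree $1$. Because $\widetilde{\m^n} = \m^n$ for all $n \gg 0$, the component $\phi_n$ is an isomorphism for $n \gg 0$, so $K := \ker \phi$ and $C := \operatorname{coker}\phi$ are concentrated in finitely many degrees and hence have finite length. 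Setting $I := \image\phi \cong G/K$, I obtain short exact sequences of graded $G$-modules
\[
0 \to K \to G \to I \to 0, \qquad 0 \to I \to \widetilde{G} \to C \to 0,
\]
with $K$ and $C$ of finite length. In particular $\widetilde{G}$ is a finite $G$-module, and since $G$ and $\widetilde{G}$ have the same Hilbert polynomial, $\dim\widetilde{G} = \dim G = d$.

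For (1) I would apply local cohomology $H^\bullet_\M(-)$ to these two sequences, where $\M$ is the homogeneous maximal ideal of $G$. Since $\phi$ is onto in degree $1$, $\M\widetilde{G}$ is primary to the homogeneous maximal ideal of $\widetilde{G}$, so $H^i_\M(\widetilde{G})$ coincides with the local cohomology of $\widetilde{G}$ with respect to its own maximal ideal; as $\widetilde{G}$ is \CM \ of dimension $d$, this vanishes for $i < d$ and is nonzero for $i = d$. Using that $C$ and $K$ have finite length, the long exact sequences give $H^0_\M(I) = 0$, then $H^i_\M(G) \cong H^i_\M(I)$ for $i \geq 1$ with $H^i_\M(I)$ of finite length for $1 \leq i \leq d-1$, and $H^d_\M(G) \cong H^d_\M(\widetilde{G}) \neq 0$. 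Hence $H^i_\M(G)$ has finite length for all $i < d$ while $\dim G = d$, i.e.\ $G$ is \GCM.

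For (2) I would use that a power of $\M$ annihilates both $K$ and $C$, so that for every homogeneous prime $\q \neq \M$ of $G$ the localized map $G_\q \to \widetilde{G}_\q$ is an isomorphism. Since $\widetilde{G}$ is module-finite over $G$, this sets up a bijection between the minimal primes of $G$ and those of $\widetilde{G}$ under which $G/\q \hookrightarrow \widetilde{G}/\widetilde{\q}$ is a module-finite extension, so $\dim G/\q = \dim\widetilde{G}/\widetilde{\q}$. A \CM \ finitely generated graded algebra over a field is equidimensional, so the right-hand side equals $d$ for every minimal prime $\widetilde{\q}$ of $\widetilde{G}$; therefore $\dim G/\q = d$ for every minimal prime $\q$ of $G$.

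The step that needs genuine care is the first one: checking that $\phi$ really is a well-defined homomorphism of graded rings and, above all, that both $\ker\phi$ and $\operatorname{coker}\phi$ have finite length (this is where the facts $\widetilde{\m^n} = \m^n$ for $n \gg 0$ and $\widetilde{\m^i}\widetilde{\m^j}\sub\widetilde{\m^{i+j}}$ enter). Everything after that is formal — a diagram chase with the long exact sequences of local cohomology for (1), and a localization argument together with the equidimensionality of graded \CM \ algebras for (2) — and involves no delicate estimates.
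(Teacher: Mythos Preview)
Your proof is correct and follows the same essential idea as the paper: construct the natural graded ring map $\phi\colon G\to\widetilde{G}$ and exploit that its kernel and cokernel have finite length. The only difference is in how you finish part~(1): the paper observes that $G_P\cong\widetilde{G}_P$ for every $P\neq\M$ and invokes the punctured--spectrum characterization of generalized \CM\ rings, whereas you run the long exact sequences in $H^\bullet_\M(-)$ to see directly that $H^i_\M(G)$ has finite length for $i<d$; for part~(2) the paper simply notes that a minimal prime $P$ of $G$ lies in $\Ass_G(\widetilde{G})$ and appeals to unmixedness of \CM\ modules, which is a slightly quicker variant of your minimal--prime correspondence.
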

\begin{proof}
(1) We note that $\widetilde{G}$ is finitely generated as a $G$-module. Furthermore  the natural map $G \rt \widetilde{G}$ induces an exact sequence of $G$-modules
\[
0 \rt U \rt G \rt \widetilde{G} \rt V \rt 0,
\]
where $U, V$ are of finite length. Let $\M$ be the maximal homogeneous ideal of $G$. If $P$ is a prime ideal in $G$ with $P \neq \M$ then $G_P \cong \widetilde{G}_P$. It  follows that $G_P$ is \CM \ for all $P \neq \M$. So $G$ is generalized \CM. 

(2) Let $P$ be a minimal prime of  $G$. Then $P \neq \M$. Furthermore $G_P \cong \widetilde{G}_P$. So $P$ is an associate prime of
$\widetilde{G}$. The result now follows from \cite[2.1.2(a)]{BH}.
\end{proof}

\s \label{quot} Assume $(A,\m)$ is \CM \ of dimension $d \geq 2$. Let $x_1,\ldots, x_d$ be an $A$-superficial sequence. Set $J = (x_1,\ldots, x_d)$ and $(B,\n) = (A/(x_1), \m/(x_1))$.  If $I$ is an ideal in $A$ then set $\ov{I}$ to be its image in $B$. Notice $\ov{\widetilde{\m^i}} \subseteq \widetilde{\n^i}$. So we have a natural maps
 $$\eta_i \colon \widetilde{\m^{i+1}}/ J\widetilde{\m^i}  \rt \widetilde{\n^{i+1}}/\ov{J} \widetilde{\n^i} \quad \text{for all} \ i \geq 0. $$
We show
\begin{proposition}\label{nn}
(with setup as in \ref{quot})
\begin{enumerate}[\rm (1)]
\item
If $\ov{\widetilde{\m^s}} = \widetilde{\n^s}$ for some $s$ then $\eta_s$ is injective.
\item
If $\ov{\widetilde{\m^j}} = \widetilde{\n^j}$ for $j = s, s+1$ then $\eta_s$ is bijective.
\end{enumerate}
\end{proposition}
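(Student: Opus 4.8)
The plan is to unwind $\eta_s$ at the level of elements; the only nontrivial input is the Ratliff--Rush colon formula $(\widetilde{\m^{i+1}} \colon x_1) = \widetilde{\m^i}$ recalled above (valid since $x_1$ is $A$-superficial and $\depth A = d > 0$). First one checks that $\eta_s$ is a well defined map of the stated modules: since $J \sub \m \sub \widetilde{\m}$ one has $J\widetilde{\m^s} \sub \widetilde{\m}\,\widetilde{\m^s} \sub \widetilde{\m^{s+1}}$, and likewise $\ov{J}\widetilde{\n^s} \sub \widetilde{\n^{s+1}}$, so both quotients make sense; and if $a \in J\widetilde{\m^s}$ then $\ov{a} \in \ov{J}\,\ov{\widetilde{\m^s}} \sub \ov{J}\widetilde{\n^s}$, using only the inclusion $\ov{\widetilde{\m^s}} \sub \widetilde{\n^s}$ noted in \ref{quot}.

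For part (1), take $a \in \widetilde{\m^{s+1}}$ whose class lies in $\ker \eta_s$, that is, $\ov{a} \in \ov{J}\widetilde{\n^s}$. The hypothesis $\ov{\widetilde{\m^s}} = \widetilde{\n^s}$ gives $\ov{J}\widetilde{\n^s} = \ov{J}\,\ov{\widetilde{\m^s}} = \ov{J\widetilde{\m^s}}$, so $a \in J\widetilde{\m^s} + (x_1)$; write $a = p + x_1 c$ with $p \in J\widetilde{\m^s}$ and $c \in A$. Then $x_1 c = a - p \in \widetilde{\m^{s+1}}$, hence $c \in (\widetilde{\m^{s+1}} \colon x_1) = \widetilde{\m^s}$, so that $x_1 c \in (x_1)\widetilde{\m^s} \sub J\widetilde{\m^s}$ and therefore $a \in J\widetilde{\m^s}$; thus $\eta_s$ is injective. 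For part (2), injectivity is exactly part (1) applied with the hypothesis at $j = s$, while surjectivity uses the hypothesis at $j = s+1$: every class in $\widetilde{\n^{s+1}}/\ov{J}\widetilde{\n^s}$ is represented by some $y \in \widetilde{\n^{s+1}} = \ov{\widetilde{\m^{s+1}}}$, so $y = \ov{a}$ for some $a \in \widetilde{\m^{s+1}}$, and $\eta_s$ sends the class of $a$ to the class of $y$.

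There is no serious obstacle; the argument is a short diagram chase. The one point to handle with care is the identity $\ov{J}\widetilde{\n^s} = \ov{J\widetilde{\m^s}}$ under the standing hypothesis, which is precisely what converts the congruence $a \equiv p \pmod{(x_1)}$ into genuine membership $a \in J\widetilde{\m^s}$ through the Ratliff--Rush colon formula; together with the bookkeeping remark that injectivity of $\eta_s$ consumes only the $s$-th hypothesis whereas surjectivity consumes only the $(s+1)$-st, this gives both statements.
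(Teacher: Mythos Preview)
Your proof is correct and follows essentially the same route as the paper: lift from $\widetilde{\n^s}$ to $\widetilde{\m^s}$ via the hypothesis, then invoke the Ratliff--Rush colon identity $(\widetilde{\m^{s+1}} \colon x_1) = \widetilde{\m^s}$ to force the residual $x_1$-term into $J\widetilde{\m^s}$. The only cosmetic difference is that the paper lifts the individual coefficients $a_t \in \widetilde{\n^s}$ to $b_t \in \widetilde{\m^s}$, whereas you package this as the ideal identity $\ov{J}\widetilde{\n^s} = \ov{J\widetilde{\m^s}}$; the content is the same.
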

\begin{proof}
(1) Suppose $\eta_s( p + J \widetilde{\m^s}) = 0$. Then $\ov{p} = \sum_{ t = 2}^{d}a_tx_t$ for some $a_t \in \widetilde{\n^s}$. As $\ov{\widetilde{\m^s}} = \widetilde{\n^s}$ there exists $b_t \in \widetilde{\m^s}$ with $\ov{b_t} = a_t$ for $t = 2,\ldots, d$. So
\[
p = \sum_{t =2}^{d} b_tx_t +   rx_1 \quad \text{for some} \ r \in A.
\]
We get  $xr \in \widetilde{\m^{s+1}}$. So $r \in \widetilde{\m^s}$. Thus $p \in J \widetilde{\m^s}$. It follows that $\eta_s$ is injective.

(2) As $\ov{\widetilde{\m^{s+1}}} = \widetilde{\n^{s+1}}$ then clearly $\eta_s$ is surjective. As $\ov{\widetilde{\m^s}} = \widetilde{\n^s}$ we get by (1) that $\eta_s$ is injective. So $\eta_s$ is bijective. 
\end{proof}
The following result is needed in the  proof of Theorem \ref{depth0}.
\begin{lemma}\label{m3J}
Let $(A,\m)$ be a \CM \ local ring of dimension three with an infinite residue field. Let $x,y$ be an $A$-superficial sequence. Let $z$ be $A\oplus A/(x,y)$-superficial (note $z$ is $A$-superficial and $x,y,z$ is an $A$-superficial sequence). Set $J = (x,y,z),  (B, \n) = (A/(x), \m/(x)) $ and $\ov{J} 
= (y,z)B$. Consider the natural complex
$$ 0 \rt \frac{ (\m^3  \colon x)}{\m^2} \xrightarrow{\alpha} \frac{ \m^3}{J\m^2} \xrightarrow{\pi} \frac{\n^3}{\ov{J} \n^2} \rt 0$$
where $\pi$ is the natural surjection and $\alpha(a + \m^2) = xa + J\m^2$.  Then the above complex is exact.
\end{lemma}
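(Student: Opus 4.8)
The plan is to check the three exactness conditions in turn, leaving injectivity of $\alpha$ for last since it is the only substantive point. That $\alpha$ and $\pi$ are well defined and that $\pi\alpha = 0$ is immediate: $a \in (\m^3 \colon x)$ gives $xa \in \m^3$ and $x\m^2 \sub J\m^2$, so $\alpha$ makes sense, while $\pi\alpha = 0$ because $xa$ maps to $0$ in $B = A/(x)$. Surjectivity of $\pi$ is also formal: since $\n = \m B$ we have $\n^3 = \m^3 B$, the image of $\m^3$ in $B$, and $\ov{J}\n^2$ is the image of $J\m^2$ in $B$, so the induced map $\m^3/J\m^2 \rt \n^3/\ov{J}\n^2$ is onto. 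For exactness at the middle, if $a \in \m^3$ has image lying in $\ov{J}\n^2$, write $a = yb_2 + zb_3 + xr$ with $b_2, b_3 \in \m^2$ and $r \in A$; then $xr = a - yb_2 - zb_3 \in \m^3$, so $r \in (\m^3 \colon x)$, and $a \equiv xr \pmod{J\m^2}$, i.e. $a + J\m^2 = \alpha(r + \m^2) \in \image \alpha$. The reverse inclusion $\image \alpha \sub \ker \pi$ is just $\pi\alpha = 0$.

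The heart of the matter is injectivity of $\alpha$. Since $J\m^2 \sub \m^3$, the kernel of $\alpha$ is $(J\m^2 \colon_A x)/\m^2$, so it suffices to prove $(J\m^2 \colon_A x) = \m^2$, the inclusion $\supseteq$ being clear. I would first record that $x,y,z$, being a maximal superficial sequence, is a system of parameters, hence --- $A$ being \CM \ --- a regular sequence in every order; in particular $x$ is regular on $A/(y,z)$, $y$ is regular on $A/(z)$, and $z$ is a nonzerodivisor. Now given $xr = xb + yb_2 + zb_3$ with $b, b_2, b_3 \in \m^2$, one has $x(r-b) = yb_2 + zb_3 \in (y,z)$, hence $r - b = yc + zd$; substituting this back gives $y(xc - b_2) = z(b_3 - xd)$, so $xc - b_2 \in (z)$, say $xc - b_2 = zg$, and then $z(yg) = z(b_3 - xd)$ yields $xd = b_3 - yg$ since $z$ is a nonzerodivisor. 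If $g \in \m$ then $xc, xd \in \m^2$, and since $x \notin \m^2$ this forces $c, d \in \m$, whence $r = b + yc + zd \in \m^2$. If instead $g$ is a unit, then $z = g^{-1}(xc - b_2) \in xA + \m^2 \sub (x,y) + \m^2$, contradicting that the image of $z$ in the one-dimensional \CM \ ring $A/(x,y)$ is superficial and hence not contained in the square of its maximal ideal.

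The one genuinely delicate point is this last step: the relation $xr \in J\m^2$ has to be pushed through two successive cancellations --- each using that $A$ is \CM \ --- down to the single element $g$, after which one recognizes that $g$ being a unit would force $z$ into $(x,y) + \m^2$, which is precisely what the choice of $z$ as a superficial element for $A/(x,y)$ rules out. Everything else is routine bookkeeping with the natural maps.
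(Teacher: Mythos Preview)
Your proof is correct; the treatment of surjectivity of $\pi$ and exactness at the middle coincides with the paper's. The injectivity of $\alpha$, however, is handled by a genuinely different and more elementary argument. The paper passes to $C = A/(z)$, uses the regular sequence $\bar x, \bar y$ there to obtain a relation $\bar b = \delta \bar x$ with $\bar b \in \q^2$, and then invokes the Northcott--Rees analytic independence lemma to force $\delta \in \m$; lifting back gives $t = a + \delta y + rz$, and the final step uses the Ratliff--Rush property $(\widetilde{\m^2}\colon z) = \m$ (valid since $z$ is $A$-superficial and $t \in (\m^3\colon x)\subseteq \widetilde{\m^2}$) to conclude $r\in\m$. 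You instead stay in $A$ and perform two successive regular-sequence cancellations to isolate a single element $g$, after which the argument splits cleanly: if $g\in\m$ you are done by the trivial observation that $xc\in\m^2$ with $x\notin\m^2$ forces $c\in\m$, and if $g$ is a unit you reach $z\in (x)+\m^2\subseteq (x,y)+\m^2$, contradicting the $A/(x,y)$-superficiality of $z$. Your route avoids both the Northcott--Rees citation and the Ratliff--Rush machinery, trading them for a direct syzygy chase; the paper's route is shorter on the page but leans on more external input. Note that your final contradiction could equally be phrased as ``$x,y,z$ minimally generate the reduction $J$, hence their images in $\m/\m^2$ are independent,'' which is exactly what $z\notin (x,y)+\m^2$ says.
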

\begin{proof}
Clearly $\pi$ is surjective. Let $\pi(a + J\m^2) = 0$. It follows that $a = y\xi_2 + z \xi_2^\prime + xr$ where $\xi, \xi^\prime \in \m^2$ and $r \in A$. Note $r \in (\m^3 \colon x)$. Furthermore $\alpha(r + \m^2) = a + J\m^2$. 

It remains to prove $\alpha$ is injective. Suppose $\alpha(t + \m^2) = 0$. It follows that
$tx  = ax + by + cz$ where $a,b,c \in \m^2$. Set $(C, \q) = (A/(z), \m/(z))$. We note that $J$ is a minimal reduction of $\m$. So $(x,y)C$ is a 
minimal reduction of $\q$.   In $C$ we get $\ov{t}x = \ov{a}x + \ov{b} y$. As $x,y$ is a regular sequence in $C$ we get
$\ov{t}- \ov{a} = \delta y$ and $\ov{b} = \delta x$ for some $\delta \in A$. As $(x,y)C$ is a minimal reduction of $\q$ and $\ov{b} \in \q^2$, by analyticity (see \cite[Lemma 2]{NR}) we get $\delta \in  \m$.
Thus $t = a + y\delta + rz$ for some $r \in A$. Note $t \in \widetilde{\m^2}$. So $rz \in \widetilde{\m^2}$. As $z$ is $A$-superficial we get
$r \in \m$. It follows that $t \in \m^2$. Thus $\alpha$ is injective. 
\end{proof}
\section{$e_2 = e_1 - e + 1 = 3$ and $\dim A = 2$}
To prove this result we need the following:
\begin{proposition}\label{dim2}
Let $(A,\m)$ be a \CM  \ local ring of dimension two and with an infinite residue field. Let $x,y$ be an $A$-superficial sequence. Set $(B,\n) = (A/(x), \m/(x))$ and $J = (x,y)$.  Assume $e_2 = e_1 - e_0 + 1 = 3$.  Set $G = G(A)$. Then
\begin{enumerate}[\rm(1)]
\item
$\depth G = 0$ or $2$. If $\depth G = 2$ then $h_A(z) = 1 + hz + 3z^2$. In particular $e = h + 4$ if $\depth G = 2$.
\item
If $\depth G = 0$ then
\begin{enumerate}[\rm (a)]
\item
$\widetilde{\m^{n+1}}  = J \widetilde{\m^n}$ for all $n \geq 2$.
\item
$\widetilde{G}(A)$ is \CM.

\item
 $\widetilde{\m^2} \neq \m^2$. 

\item 
 $\ell(\widetilde{\m^2}/\m^2) = 1$, $\ell(\m^2/J\m) = 2$  and $e = h + 3$.
\item
 $\widetilde{h}_A(z) = 1 + (h-1)z + 3z^2$.
\item 
$h_B(z) = 1 + hz + z^2 + z^3$.

\item
 $\widetilde{\m^j} = \m^j$ for all $j \geq 3$. Furthermore $\ell(\m^3/J\m^2) = 2$ and $\m^4 = J\m^3$.

\item
$h_A(z) = 1 + hz + 3z^3 - z^4$.

\item
$\type(A) = h$.
\end{enumerate}
\end{enumerate}
\end{proposition}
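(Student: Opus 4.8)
The plan is to extract the Ratliff--Rush structure of $A$ from Itoh's formula \ref{bella}, read off the dichotomy for $\depth G$, and then transfer the finer information across the one-dimensional quotient $(B,\n)=(A/(x),\m/(x))$. First I would compute the $\si_j$ of \ref{bella}: since $J$ is a minimal reduction of the \CM\ ring $A$ we have $\ell(A/J)=e$, so $\si_0=\ell(\wt\m/J)=\ell(\m/J)=e-1$; then $e_1=\sum_j\si_j=e+2$ and $e_2=\sum_j j\si_j=3$ give $\sum_{j\ge1}\si_j=3=\sum_{j\ge1}j\si_j$, and since $\sum_{j\ge1}j\si_j\ge\sum_{j\ge1}\si_j$ with equality only when $\si_j=0$ for $j\ge2$, necessarily $\si_1=3$ and $\si_j=0$ for $j\ge2$. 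The latter is exactly $\wt{\m^{n+1}}=J\wt{\m^n}$ for $n\ge2$, i.e.\ (2)(a) (note this used nothing about $\depth G$), and \ref{bella} gives $\wt h_A(z)=1+(e-4)z+3z^2$.

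For (1) and (2)(c): if $\wt{\m^2}=\m^2$, then by (2)(a), $\wt{\m^3}=J\wt{\m^2}=J\m^2\sub\m^3\sub\wt{\m^3}$, so $\wt{\m^3}=\m^3$, and inductively $\wt{\m^n}=\m^n$ for all $n$; the converse holds by the last item on the Ratliff--Rush filtration, so $\depth G>0\iff\wt{\m^2}=\m^2$, and then $G=\wt G$, $h_A=\wt h_A=1+hz+3z^2$ (the coefficient of $z$ forces $e=h+4$). To see $\depth G=2$ rather than $1$ in that case I would pass to $B$: there $b_x(z)=0$, so $h_B=h_A$, and the dimension-one analysis \ref{dim1} together with \ref{nn}(1) applied to $\eta_1$ (whose source $\wt{\m^2}/J\m$ has length $\si_1=3$ and whose target has length $\le 3$ by \ref{dim1}) forces $\wt h_B=\wt h_A=h_B$, hence $\wt{\n^j}=\n^j$ for all $j$, i.e.
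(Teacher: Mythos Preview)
Your computation of the $\si_j$ from \ref{bella} is correct and is exactly how the paper begins: $\si_0=e-1$, $\si_1=3$, $\si_j=0$ for $j\ge 2$, giving (2)(a) immediately and $\wt h_A(z)=1+(e-4)z+3z^2$; and your argument that $\depth G>0\iff \wt{\m^2}=\m^2$ (hence $h_A=\wt h_A=1+hz+3z^2$ and $e=h+4$) matches the paper's proof of (1) and (2)(c).

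Where you diverge is the step ``$\depth G>0\Rightarrow\depth G=2$''. Your use of \ref{nn}(1) for $\eta_1$ is not set up correctly: the target of $\eta_1$ is $\wt{\n^2}/y\n$, not $\n^2/y\n$, and \ref{dim1} only tells you about the latter, so the bound ``target has length $\le 3$'' is unjustified. The paper's route is simpler: once $x^*$ is $G(A)$-regular you have $h_B=h_A=1+hz+3z^2$, and then \ref{dim1} reads off $\rho_2(B)=0$, i.e.\ $\n^3=y\n^2$; with $\ell(\n^2/y\n)=3$ this puts you in the situation of \cite[2.1]{S2}, giving $\depth G(B)=1$, and Sally descent \ref{SD}(2) finishes.

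More seriously, your proposal stops mid-sentence and does not touch (2)(b), (d), (f), (g), (h), (i), and each of these needs a genuine extra input:
\begin{itemize}
\item[(b)] The Cohen--Macaulayness of $\wt G(A)$ is obtained via the Valabrega--Valla criterion for the Ratliff--Rush filtration (\cite[3.5]{HM}), using $\wt{\m^2}\cap J=J\m$ and $\wt{\m^{i+1}}\cap J=J\wt{\m^i}$ for $i\ge 2$.
\item[(d)] From $3=\si_1=\ell(\wt{\m^2}/\m^2)+\ell(\m^2/J\m)$ you only get $\ell(\m^2/J\m)\le 2$; to exclude $\le 1$ the paper invokes \cite{rv1} or \cite{W}, which say that $\ell(\m^2/J\m)\le 1$ forces $\depth G\ge d-1$. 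This is the step that pins down $e=h+3$ and $\ell(\wt{\m^2}/\m^2)=1$.
\item[(f),(g)] These are intertwined: from (d) one gets $\ell(\n^2/y\n)=2$ and hence, via \ref{dim1}, $h_B(z)=1+hz+z^2+z^3$ and $e_2(B)=4$. Then \ref{mod-sup} gives $\sum_{n\ge 2}\ell((\m^{n+1}\!:\!x)/\m^n)=e_2(B)-e_2(A)=1$, and the exact sequence of \cite[p.~305]{rv2} together with Huckaba's result \cite{H} (if $\ell(\m^3/J\m^2)\le 1$ then $\depth G\ge 1$) forces the single nonzero colon to sit at $n=2$; the exact sequence of \cite[2.6]{Pu5} then yields $\wt{\m^j}=\m^j$ for $j\ge 3$.
\item[(i)] The type computation is a separate argument on the Artinian quotient $C=A/J$: one shows $h_C(z)=1+hz+2z^2$, rules out $\type(C)<h$ via \cite[2.3]{RV} (which again would force $\ell(\m^3/J\m^2)\le 1$), and rules out $\type(C)=h+1$ by a direct generator count on $\q^2$.
\end{itemize}
None of these ideas appears in your outline, and several (the appeals to \cite{rv1}/\cite{W}, to Huckaba \cite{H}, and to \cite{RV}) are the crux: they are what convert ``$\depth G=0$'' into the precise numerics $\ell(\m^2/J\m)=2$, $\ell(\m^3/J\m^2)=2$, $\type(A)=h$.
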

\begin{proof}
Let $x,y$ be an $A$-superficial sequence  \wrt \ $\m$. 
(1) If $\depth G \geq 1$ then we have $e_2(B) = e_1(B) - e_0(B) + 1$. Using \ref{dim1} it follows that $\n^3 = y\n^2$ and $\ell(\n^2/y\n)  = 3$.  So by \cite[2.1]{S2} we have that $\depth G_\n(B) = 1$ and $h_B(z) = 1 + hz + 3z^2$. So by Sally descent we have $\depth G = 2$. 
Also $h_A(z) = h_B(z)$.

(2) For $j \geq 0$ set $\sigma_j = \ell(\widetilde{\m^{j+1}}/J\widetilde{\m^{j}})$. Then by \ref{bella} we have $e_1 = \sum_{j \geq 0} \sigma_j$ and $e_2 = \sum_{j \geq 1}j\sigma_j$. Since $e_2 = e_1 - e_0 + 1$ we get $\sigma_j = 0$ for $j \geq 2$ and
$e_2 = \sigma_1 = \ell(\widetilde{\m^{2}}/J\m) = 3$. 

(a) This follows as $\sigma_j = 0$ for $j \geq 2$.\\
(b) We have $\widetilde{\m^2} \cap J = J \m $ and for $i \geq 2$ we also have $\widetilde{\m^{i+1}} \cap J = J\widetilde{\m^i}$ (since $\sigma_j = 0$ for $j \geq 2$). Thus by Valabrega-Valla theorem for filtration's we get that $\widetilde{G}(A)$ is \CM \ (see \cite[3.5]{HM}).

(c) If $\widetilde{\m^2} = \m^2$ then as $\sigma_2 = 0$ we get $\widetilde{\m^3} = J\m^2 $. It follows that $\widetilde{\m^3} = \m^{3}$. As $\sigma_j = 0$ for $j \geq 2$ inductively one can show that $\widetilde{\m^j} = \m^j$ for $j \geq 2$. It follows that $\depth G \geq 1$, a contradiction. Therefore $\widetilde{\m^2} \neq \m^2$.

(d) Note 
\[
3 = e_2 = \ell(\widetilde{\m^{2}}/J\m) = \ell(\widetilde{\m^2}/\m^2) + \ell(\m^2/J\m).
\]
It follows that $\ell(\m^2/J\m) \leq 2$. If $\ell(\m^2/J\m) \leq 1$ then by  \cite{rv1} or \cite{W} we have $\depth G \geq 1$, a contradiction. Thus $\ell(\m^2/J\m) = 2$. It follows that $\ell(\widetilde{\m^2}/\m^2) = 1$. We also have $e = h + 3$ by \ref{SA}.

(e) This follows from \ref{bella}. 
 
(f) As $e_i(B) = e_i$ for $i \leq 1$, \ref{mod-sup} we get $e_1(B) = e_0(B) + 2$. 
 Note $e_1(B) = \sum_{j\geq 0}\ell(\n^{j+1}/y\n^j)$. Also $\ell(\n^2/y\n) = \ell(\m^2/J\m) = 2$. It follows that $\ell(\n^3/y\n^2) = 1$ and $\n^4 = y \n^3$. 
 Thus $e_2(B) =  \sum_{j\geq 1}j\ell(\n^{j+1}/y\n^j) = 4$. The formula for $h_B(z)$ follows from \ref{dim1}.

(g) By  \ref{mod-sup} we have
\[
e_2 = e_2(B) - \sum_{n \geq 2} \ell((\m^{n+1} \colon x)/\m^n).
\]
So $ \sum_{n\geq 2} \ell((\m^{n+1} \colon x)/\m^n) = 1$.  We have an exact sequence, see \cite[p.\ 305]{rv2}
\begin{equation*}
0 \rt \frac{(\m^3 \colon x)}{\m^2} \rt \frac{\m^3}{J\m^2} \rt \frac{\n^3}{y\n^2} \rt 0. \tag{*}
\end{equation*}
If $(\m^3 \colon x) = \m^2$ then $\ell(\m^3/J\m^2) = 1$. So by a result due to Huckaba \cite{H} we get $\depth G \geq 1$, a contradiction. So $(\m^3 \colon x) \neq \m^2$.
It follows that $(\m^{n+1} \colon x) = \m^n $ for $n \geq 3$.  
For all $i \geq 0$ we have an exact sequence, see \cite[2.6]{Pu5},
 \begin{equation*}
  0 \rt \frac{(\m^{i+1}\colon x)}{\m^{i}} \rt \frac{\widetilde{\m^i}}{\m^i}  \rt \frac{\widetilde{\m^{i+1}}}{\m^{i+1}} 
 \end{equation*}
 It follows that $\widetilde{\m^j} = \m^j$ for $j \geq 3$. By (*) we get $\ell(\m^3/J\m^2) = 2$. Also by (a),  $\widetilde{\m^4} = J\widetilde{\m^3}$. As $\widetilde{\m^j} = \m^j$ for $j \geq 3$ we get $\m^4 = J\m^3$.

(h) This follows from \ref{ratliff-hilb}.

(i) If $\type(A) < h$ then it follows from \cite[2.3]{RV} that $\ell(\m^3/J\m^2) \leq 1$ and so by a result of Huckaba \cite{H} we get $\depth G \geq 1$ which is a contradiction.
We show that $\type(A) > h$ is also not possible. Set $(C,\q) = (A/(x,y) , \m/(x,y))$. As $\m^3 \subseteq J$ we  get $\q^3 = 0$. 
Also as $\m^2 \nsubseteq J$ we get $\q^2 \neq 0$. So
$h_C(z) = 1 + hz + 2z^2$.  Note $\type(A) = \type(C)$. As $\q^2 \neq 0 $ we get $\type(C) \leq h + 1$. If $\type(C) = h + 1$ then
$\q = (a, K)$ where $K \subseteq (0 \colon \q)$. Then $\q^2 = (a^2)$ a contradiction as $\mu(\q^2) = 2$.  Thus $\type(C) = h$.
\end{proof}

\section{  $e_2 = e_1 - e + 1 = 3$ and $\dim A = 3$ }
The most difficult case of Theorem \ref{e2} occurs when $\dim A = 3$. We prove it separately.
\begin{theorem}\label{depth0}
Let $(A,\m)$ be a \CM \ local ring of dimension $3$ and infinite residue field. Assume $e_2 = e_1 - e + 1 = 3$.  Then one of the following cases occur
\begin{enumerate}[\rm (i)]
\item
$e = h + 4$, $G$ is \CM \  and $h_A(z) = 1 + hz + 3z^2$.
\item
$e = h + 3$, $d \geq 2$, $\depth G = 1$, $\type(A) = h$ and 

$\displaystyle{h_A(z) = 1 + hz + 3z^3 - z^4.}$
\end{enumerate} 
\end{theorem}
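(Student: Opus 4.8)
The plan is to cut $A$ down to the two‑dimensional case with a superficial element, feed the quotient into Proposition \ref{dim2}, and then rebuild $A$ from that quotient via the comparison exact sequences in Lemma \ref{m3J} and Proposition \ref{nn}. First I would choose an $A$-superficial sequence $x,y,z$ with $z$ moreover $A\oplus A/(x,y)$-superficial (as in Lemma \ref{m3J}; possible since the residue field is infinite) and with $x^*$ avoiding every associated prime of $G=G(A)$ other than its maximal homogeneous ideal $\M$. Set $B=A/(x)$, $C=A/(x,y)$ and $J=(x,y,z)$. Since $e_i(B)=e_i(A)$ for $i\le d-1=2$ by \ref{mod-sup}, $B$ is a two-dimensional \CM\ ring with $e_2(B)=e_1(B)-e_0(B)+1=3$, so Proposition \ref{dim2} applies to $B$: either $\depth G(B)=2$ or $\depth G(B)=0$.

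Suppose first $\depth G(B)=2$, so $h_B(z)=1+hz+3z^2$. Applying Sally descent (\ref{SD}(2)) to $B$ along $\bar y$ gives $\depth G(C)\ge1$, and then applying it to $A$ along $x,y$ gives $\depth G(A)\ge3$, so $G$ is \CM. As $\depth G(A)>0$, the Ratliff-Rush filtration of $A$ is the $\m$-adic one, so $b_x(z)=0$ and $h_A(z)=h_B(z)=1+hz+3z^2$ by \ref{mod-sup}, whence $e=h_A(1)=h+4$; this is case (i). Now suppose $\depth G(B)=0$. Then $\depth G(A)\le1$ by Sally descent, and Proposition \ref{dim2}(2) applied to $B$ yields $e=e_0(B)=h+3$, $\type(A)=\type(B)=h$ (passing to $A/(x)$ preserves \CM-type, as $x$ is $A$-regular), $h_B(z)=1+hz+3z^3-z^4$, and $\widetilde{\n^j}=\n^j$ for $j\ge3$ with $\ell(\n^3/\ov J\n^2)=2$ and $\ell(\widetilde{\n^2}/\n^2)=1$. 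Since $h_A(z)=h_B(z)-(1-z)^3b_x(z)$ where $b_x(z)=\sum_{n\ge0}\ell((\m^{n+1}\colon x)/\m^n)z^n$, and since $\depth G(A)\ge1$ is equivalent to $\widetilde{\m^n}=\m^n$ for all $n$, equivalently to $b_x(z)=0$, it suffices to prove $b_x(z)=0$.

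For this, note $b_0=b_1=0$ trivially. Lemma \ref{m3J} gives an exact sequence $0\to(\m^3\colon x)/\m^2\to\m^3/J\m^2\to\n^3/\ov J\n^2\to 0$, so $\ell(\m^3/J\m^2)=b_2+2$, and the goal is $b_2=0$, i.e.\ $\ell(\m^3/J\m^2)=2$. Putting $j=2$ in the exact sequence $0\to(\m^{j+1}\colon x)/\m^j\to\widetilde{\m^j}/\m^j\to\widetilde{\m^{j+1}}/\m^{j+1}$ of \cite{Pu5} gives $b_2\le\ell(\widetilde{\m^2}/\m^2)$, while the reduction map $A\to B$ induces an injection of $\widetilde{\m^2}/\m^2$ into $\widetilde{\n^2}/\n^2$ (its kernel is zero since $(\widetilde{\m^2}\colon x)=\widetilde{\m}=\m$), so $\ell(\widetilde{\m^2}/\m^2)\le\ell(\widetilde{\n^2}/\n^2)=1$. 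Hence $b_2\in\{0,1\}$, and the crux is to rule out $b_2=1$; I would do this by combining Lemma \ref{m3J} with the finiteness of $H^0_\M(G)$ and Huckaba's criterion \cite{H}. Granting $b_2=0$, so that $(\m^3\colon x)=\m^2$ and $\widetilde{\m^2}=\m^2$, a downward argument using $\ov{\widetilde{\m^j}}\subseteq\widetilde{\n^j}=\n^j$ for $j\ge3$ together with $(\widetilde{\m^{j+1}}\colon x)=\widetilde{\m^j}$ shows $\widetilde{\m^j}=\m^j$ for all $j\ge2$, hence for all $j$. Therefore $b_x(z)=0$, so $\depth G(A)=1$ and $h_A(z)=h_B(z)=1+hz+3z^3-z^4$, finishing the proof.

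The hard part will be the step $b_2=0$, equivalently $\ell(\m^3/J\m^2)=2$, equivalently $\depth G(A)\ge1$. The difficulty is that cutting by a superficial hyperplane section can drop the depth of the associated graded ring from $1$ to $0$, and the competing configuration $\depth G(A)=0$ is numerically consistent with everything Proposition \ref{dim2} records about $B$ and $C$; in it $\widetilde G(A)$ would even be \CM\ with $\widetilde h_A(z)=1+(h-1)z+3z^2$, a perfectly legitimate Hilbert series. So this configuration cannot be excluded by soft reasoning on the quotients alone; one must exploit genuine information about $A$ itself, which is precisely what Lemma \ref{m3J} (with its carefully chosen third superficial element) and a Huckaba-type length estimate are there to provide.
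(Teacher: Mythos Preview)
Your reduction to $B=A/(x)$ and the split into the two cases via Proposition~\ref{dim2} is exactly right, and your identification of the crux---excluding $\depth G(A)=0$ when $e=h+3$---is correct. But the proof of that crux is missing, and the tool you point to will not do the job. Huckaba's criterion \cite{H} needs $\ell(\m^{n+1}/J\m^n)\le 1$ for the relevant $n$; here $\ell(\m^2/J\m)=2$ and, in the bad configuration, $\ell(\m^3/J\m^2)=3$, so it simply does not apply. There is also a smaller slip: from $b_2=0$ you cannot conclude $\widetilde{\m^2}=\m^2$. The exact sequence you quote gives $\ell(\widetilde{\m^i}/\m^i)=b_i+\ell(\widetilde{\m^{i+1}}/\m^{i+1})$ for $i\ge 2$, so $\ell(\widetilde{\m^2}/\m^2)=\sum_{j\ge 2}b_j$; knowing only $b_2=0$ still allows $b_j=1$ for a single $j\ge 3$. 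Thus ``$b_2=0$'' is not equivalent to ``$b_x(z)=0$'', and your downward argument is circular.

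What the paper actually does is argue by contradiction and bring in a new idea you have not mentioned: Macaulay's bound. Assume $\depth G(A)=0$. First one shows $\ell(\widetilde{\m^2}/\m^2)=1$ and that $\widetilde G(A)$ is \CM, hence $G(A)$ is generalized \CM\ (Proposition~\ref{r-3}). The delicate step is proving $\widetilde{\m^3}=\m^3$: if not, one writes $\widetilde{\m^2}=\m^2+(a)$ and, using that every minimal prime $P$ of $G(A)$ satisfies $\vdim(P\cap \m/\m^2)\le \vdim(\m/\m^2)-2$, finds two $A$-superficial elements $u,v$ with $u^*,v^*$ $k$-independent; comparing $\overline{ua}$ and $\overline{va}$ in the one-dimensional space $\widetilde{\m^3}/\m^3$ produces a unit $\alpha$ with $(u-\alpha v)a\in\m^3$, forcing $a\in\m^2$, a contradiction. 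Once $\widetilde{\m^3}=\m^3$, the exact sequence gives $b_2=1$, Lemma~\ref{m3J} gives $\ell(\m^3/J\m^2)=3$, and one checks $\m^4=J\m^3$. Then $S=G(A)/(x^*,y^*,z^*)$ is a standard graded $k$-algebra with Hilbert function $1,\,h,\,2,\,3$, and Macaulay's theorem forbids $f(3)>f(2)^{\langle 2\rangle}=2$. This contradiction is the missing mechanism; neither Huckaba's criterion nor the finiteness of $H^0_\M(G)$ alone will produce it.
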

\begin{proof}
 Let $x$ be any $A$-superficial element. Set $(B,\n) = (A/(x), \m/(x))$.  Then $e_i(A) = e_i(B)$ for $i = 0,1,2$. By \ref{dim2} we get that
$e = h + 4$ or $e = h + 3$. If $e = h + 4$ then $G(B)$ is \CM. So by Sally descent we get $G(A)$ is \CM. Furthermore
$h_A(z) = h_B(z) = 1 + hz + 3z^2$. If $e = h + 3$ then $G(B)$ has depth $0$. Note $\type(A) = \type(B) = h$. By \ref{SD} we get $\depth G(A) \leq 1$. If $\depth G(A) = 1$ then $x^*$ is $G(A)$-regular and $G(A)/(x^*) = G(B)$. So $h_A(z) = h_B(z) = 1 + hz + 3z^3 - z^4$ (by \ref{dim2}).

Suppose if possible $\depth G(A) \neq 1$. Then by the above argument we get $\depth G(A) = 0$. We show:

\emph{Claim}(1) : $\ell(\widetilde{\m^2}/\m^2) = 1$.

We have an exact sequence for all $i \geq 1$ (see \cite[2.9]{Pu5})
\[
 0 \rt \frac{(\m^{i+1} \colon x)}{\m^i} \rt \frac{\widetilde{\m^{i}}}{\m^{i}}  \rt \frac{\widetilde{\m^{i+1}}}{\m^{i+1}}\rt
 \frac{\widetilde{\n^{i+1}}}{\n^{ i + 1}} \tag{*}
\]
As $\widetilde{\m} = \m$ we get an inclusion $\widetilde{\m^2}/\m^2 \rt \widetilde{\n^2}/\n^2$. Furthermore as $\widetilde{\n^j} = \n^j$ for 
$j \geq 3$  (see \ref{dim2}.2(g)) we get surjections $\widetilde{\m^j}/\m^j \rt \widetilde{\m^{j+1}}/\m^{j+1}$ for $j \geq 2$. Thus $\ell(\widetilde{\m^2}/\m^2) \leq 1$. Furthermore if $\widetilde{\m^2} = \m^2$ we get $\widetilde{\m^j} = \m^j $ for all $j$ and so $\depth G(A) > 0$ which is a contradiction.
Thus $\ell(\widetilde{\m^2}/\m^2) =1$.

\emph{Claim}(2): $\widetilde{G}(A)$ is \CM.

In the inclusion $\widetilde{\m^2}/\m^2 \rt \widetilde{\n^2}/\n^2$ both modules have same length. So $\ov{\widetilde{\m^2}}  = \widetilde{\n^2}$. Furthermore as $\widetilde{\n^j} = \n^j$ for $j \geq 3$ we get $\ov{\widetilde{\m^j}}  = \widetilde{\n^j}$ for $j \geq 3$. 
Thus the Ratilff-Rush filtration on $A$ behaves well \wrt \ superficial element $x$. So
 $\widetilde{G}(A)/(x^*) = \widetilde{G}(B)$. As $\widetilde{G}(B)$ is \CM \  (see \ref{dim2}.2(b) ) and $x^*$ is $\widetilde{G}(A)$-regular we get $\widetilde{G}(A)$ is \CM.

\emph{Claim}(3):   $G(A)$ is generalized \CM.

 By \ref{r-3} we get $G(A)$ is generalized \CM. 

\emph{Claim}(4):  $\widetilde{\m^j} = \m^j $ for all $j \geq 3$. Furthermore $\ell((\m^3 \colon x)/\m^2) = 1$.

 As $\widetilde{\n^3} = \n^3$, by (*) we have an exact sequence
\begin{equation*}
 0 \rt \frac{(\m^{3} \colon x)}{\m^2} \rt \frac{\widetilde{\m^2}}{\m^{2}}  \rt \frac{\widetilde{\m^{3}}}{\m^{3}}\rt 0. \tag{**}
\end{equation*}
Suppose if possible $\widetilde{\m^3} \neq \m^3$.  Then as $\ell(\widetilde{\m^2}/\m^2) = 1$ we get
$(\m^3 \colon x) = \m^2$ and $\ell(\widetilde{\m^3}/\m^3) = 1$. We note that we chose $x$ to be \emph{any} $A$-superficial element.
As $G(A)$ has depth zero and also as it is generalized \CM  \ it follows that
\[
\Ass(G(A)) = \{ \M, P_1, \ldots, P_s \};
\]
where $P_1, \ldots, P_s$ are minimal primes of $G(A)$ and $\M$ is the maximal homogeneous ideal of $G(A)$. By \ref{r-3} we get $\dim G(A) /P_i =   3$ for all $i$. Let $V$ be the $k$-vector space
$\m/\m^2$.  If $W$ is a $k$ vector  space then we set $\vdim W$
to be its dimension as a $k$-vector space.  If $V \subseteq P_i$ then $P_i = \M$, a contradiction. In particular $\vdim V \cap P_i \leq \vdim V -1$ for all $i$.

\textit{sub-claim:} $\vdim P_i \cap V \leq \vdim V - 2$ for all $i$.\\
Suppose for some $i$ we have $\vdim P_i \cap V =  \vdim V - 1$. Then there exists $t \in V$ such that $P_i \cap V + k t = V$. So we get
$P_i + G(A)t = \M$. It follows that $(t)$ generates the maximal homogeneous ideal in $G(A)/P_i$. So $\dim G(A)/P_i \leq 1$ a contradiction. 
 Thus we have proved the sub-claim. 

As $k$ is infinite there exists $k$-linearly independent vectors $u^*, v^*$ in $V$ such that if $H  = ku^* + kv^*$ then $H \cap P_i = 0$ for all $i$. It follows that $u - \lambda v$ is an $A$-superficial sequence for any unit $\lambda$ (see \ref{exist-sup}). Furthermore $u, v$ are $A$-superficial.

Suppose $\widetilde{\m^2} = \m^2 + (a)$. Let $x$ be any $A$-superficial element. If $xa \in \m^3$ then $a \in (\m^3 \colon x) = \m^2$ a contradiction. Thus $\ov{xa}$ is a non-zero element in $\widetilde{\m^3} /\m^3$. In particular $\ov{ua}, \ov{va}$ are non-zero in $\widetilde{\m^3} /\m^3$. As $\widetilde{\m^3} /\m^3 \cong k$ we get that there is a unit $\alpha$  such that  $\ov{ua} = \alpha \ov{va}$.
So $(u - \alpha v)a \in \m^3$. By construction $u - \alpha v$ is an $A$-superficial element. So $a \in (\m^3 \colon  u - \alpha v) = \m^2$ a contradiction. Thus $\widetilde{\m^3} = \m^3$. As we have surjections  $\widetilde{\m^j}/\m^j \rt \widetilde{\m^{j+1}}/\m^{j+1}$ for $j \geq 3$ we also get $ \widetilde{\m^j} = \m^j$ for $j \geq 4$.  By exact sequence (**) we also get $\ell((\m^3 \colon x)/\m^2 ) = 1$. This proves claim (4)

Let $x,y,z$ be an $A$-superficial sequence. Set $J = (x,y,z)$.

\emph{Claim} (5): $\widetilde{\m^{i+1}} = J \widetilde{\m^i}$ for $i \geq 2$.  In particular $\m^3 = J \widetilde{\m^2}$ and $\m^4 = J \m^3$.

We have the natural map $\eta_i \colon \widetilde{\m^{i+1}}/ J \widetilde{\m^i} \rt  \widetilde{\n^{i+1}}/ (y,z)\widetilde{\n^i}$  for all $i \geq 1$. By Claim(1) we have $\ov{\widetilde{\m^i}} = \widetilde{\n^i}$ for all $i \geq 1$. From \ref{nn} we get that $\eta_i$ is an isomorphism for $i \geq 1$. The result now follows from \ref{dim2}.(2)(a)  and Claim (4).

 \emph{Claim}(6) : $\ell(\m^2/J\m) = 2$ and  $\ell(\m^3/J\m^2) = 3$.
 
As $e = h + 3$ we get by \ref{SA} that $\ell(\m^2/J\m) = 2$.  By \ref{m3J}, for a suitable choice of $x,y,z$,  we get  a short exact sequence
\[
 0 \rt \frac{ (\m^3  \colon x)}{\m^2} \xrightarrow{\alpha} \frac{ \m^3}{J\m^2} \xrightarrow{\pi} \frac{\n^3}{\ov{J} \n^2} \rt 0.
\]
By \ref{dim2}(2)g and claim (4) we get for a choice of $x,y,z$ we have $\ell(\m^3/J\m^2) = 3$. By \cite{Pu-r};  $\ell(\m^3/J\m^2)$ is an invariant of $A$ and does not depend on choice of minimal reduction $J$. Thus  for any $A$-superficial sequence $x,y,z$ we have
$\ell(\m^3/J\m^2) = 3$.

\emph{Claim}(7):  $S = A/\m \oplus \m/(J + \m^2) \oplus \m^2/J\m \oplus \m^3/J\m^2$ is a finite dimensional homogeneous $k$-algebra.

In fact $S = G(A)/(x^*, y^*, z^*)$ (use $\m^3 \subseteq J\m$ and $\m^4 = J\m^3)$.

\emph{Claim}(8):  $S$ violates Macaulay's bound  ( \cite[4.2.10(c) ]{BH})for Hilbert function of $k = A/\m$-algebras.

Let $f$ be the Hilbert function of $S$. We have by claim (6)
\[
f(2) = 2  = \binom{2}{2} + \binom{1}{1}.
\]
By Macaulay's bound we have 
\[
3 = f(3) \leq  f(2)^{<2>} = \binom{3}{3} + \binom{2}{2} = 2,  \quad \text{a contradiction.}
\]
Thus our assumption $\depth G(A) \neq 1$ is incorrect. So $\depth G(A) = 1$.
\end{proof}

\section{Proof of Theorem \ref{e2}}
In this section we give
\begin{proof}[Proof of Theorem \ref{e2}]
We may assume that the residue field of $A$ is infinite.
If $\dim A \leq 1$ then by \cite[6.21]{V} we get $G(A)$ is \CM  \ and $\deg h_A(z) \leq 2$. It follows that $h_A(z) = 1 + hz + 3z^2$.
When $d = \dim A \geq 2$ we prove the result by induction on $d$.
The case when $\dim A = 2$ is analyzed in \ref{dim2}. 
The case when $\dim A = 3$ is analyzed in \ref{depth0}. 

 Now assume $d = \dim A \geq 4$ and the result holds for \CM \ local rings of dimension $d - 1 \geq 3$.

Let $x$ be an $A$-superficial. Set $(B, \n) = (A/(x), \m/(x))$. 
We  consider the following two cases:\\
Case (1)  $G(A)$ is \CM.  So $G(B)$ is \CM. In this case $h_A(z) = h_B(z) = 1 + hz + 3z^2$.

Case (2) $G(A)$ is not \CM.  So $G(B)$ is not \CM \ by Sally descent. In this case we have $e = h + 3$.  By induction hypothesis we get $\depth G(B) = d - 3 \geq 1$. So by Sally descent 
we get $\depth G(A)  = d -2$. Thus  $x^*$ is $G(A)$-regular. So 
\[
h_A(z) = h_B(z) = 1 + hz + 3z^3 - z^4.
\]
\end{proof}

\section{Proof of Theorem \ref{e1}}
In this section we give
\begin{proof}[Proof of Theorem \ref{e1}]
We may assume $k = A/\m$ is infinite.
We first consider the case when $d = 0$. Clearly $G(A)$ is \CM. 
Let $h_A(z) = 1 + hz + a_2z^2 + \cdots + a_sz^s$.
We have $e_1 = e +2$. So
\[
h+ \sum_{i = 2}^{s}ia_i = 3+ h + \sum_{i = 2}^{s} a_i
\]
So we get
\[
\sum_{i = 2}^{s}(i - 1)a_i = 3
\]
So the possibilities are 
\begin{enumerate}
\item
$a_2 = 3$, $a_j = 0$ for $j \geq 3$. In this case $h_A(z) = 1 + hz + 3z^2$.
\item 
$a_2 = a_3 = 1$ and $a_j = 0$ for $j \geq 4$.  In this case $h_A(z) = 1 + hz + z^2 + z^3$.
\end{enumerate}

Next we consider the case $d = 1$.  Let $x$ be $A$-superficial. Set $\rho_n = \lambda(\m^{n+1}/x\m^n)$.    If $\deg h_A(z) = s$ then
   $h_A(z) = 1 + \sum_{i = 0}^{s}(\rho_{i-1} - \rho_{i})z^i$. Also
 $ e_1 = \sum_{j \geq 0}\rho_j  $.  Note $\rho_0 = e -1$
   In particular
\[
e_1 = e - 1 + \sum_{j \geq 1} \rho_j.
\]
Thus we have
\[
\sum_{j \geq 1} \rho_j = 3
\]
Thus $\rho_1 = 3$ or $2$ or $1$.

If $\rho_1 = 3$ then $\rho_j = 0$ for $j \geq 2$. As $\m^3 = x \m^2$ we get $G(A)$ is \CM. Furthermore
$h_A(z) = 1 + hz + 3z^2$. In particular $e = h + 4$.

If $\rho_1 = 2$ then $\rho_2 = 1$ and $\rho_j = 0$ for $j \geq 3$. So we have $h_A(z) = 1 + hz + z^2 + z^3$. In particular we have $e = h +3$.

If $\rho_1 = 1$ then $\rho_j \leq 1$ for $j \geq 2$ (see \cite[3.3]{rv2}).
So we get $\rho_2 = \rho_3 = 1$. So we have $h_A(z) = 1 + hz  + z^4$. In particular $e = h + 2$ and $G(A)$ is not \CM. It follows from \cite[3.1]{S4} that $\type(A) = h$. 

Next we consider the case $d = 2$. Let $x$ be an $A$-superficial. Set $(B, \n) = (A/(x), \m/(x))$. 
We have $e_i(A) = e_i(B)$ for $i = 0, 1$.  So we have $e = h + 2$ or $e = h + 3$ or $e = h +4$.

 If $e = h + 4$ then $G(B)$ is \CM. So by
Sally descent we get $G(A)$ is \CM.  Furthermore $h_A(z) = h_B(z) = 1 + hz + 3z^2$.

If $e = h + 2$ then by \cite{rv1} or \cite{W} we get $\depth G(A) = 1$. So $x^*$ is $G(A)$-regular. Thus $h_A(z) = h_B(z) = 1 + hz  + z^4$.
We also have
$\type(A) =  \type(B) = h$.

Now consider the case $e = h + 3$.  By Itoh's bound we have $e_2(A) \geq e_1 - e + 1 = 3$.  Set $b_i = \ell((\m^{i+1} \colon x)/\m^i)$ for $i \geq 0$. Note $e_2(B) = 4$. Then by \ref{mod-sup} we get $e_2(A) = 4 - \sum_{i \geq 0}b_i$. Thus $e_2(A) = 4$ or $3$. \\
Subcase (1) $e_2(A) = 4$. In this case $ (\m^{i+1} \colon x) = \m^i$ for all $i$. In particular $x^*$ is $G(A)$-regular. So
$\depth G(A) \geq 1$. Also $h_A(z) = h_B(z) = 1 + hz + z^2 + z^3$.\\
Subcase (2) $e_2 = 3$. Note $e_2 = e_1 - e + 1 = 3$. Also $\depth G(A) = 0 $ as $(\m^{i+1} \colon x) \neq \m^i$ for some $i$. The result now follows from \ref{dim2}.

Now assume $d \geq 3$. Let $ \bx = x_1,\ldots, x_{d-2}$ be an $A$-superficial sequence. Set $(B, \n) = (A/(\bx), \m/(\bx))$. Then
$e_i(A) = e_i(B)$ for $i = 0, 1, 2$. The result now follows from our analysis of $d = 2$ case, Sally descent and Theorem \ref{e2}.
\end{proof}

\section{examples}
In this section we give examples which shows that there exists \CM \ local rings with Hilbert functions as described in Theorems \ref{e2} and \ref{e1}.
We begin by the following 
\s \label{const} \emph{Construction:}  Let $(A,\m)$ be \CM. Set $T = A[X_1,\ldots, X_m]$ and $\n = (\m, X_1,\ldots, X_m)$. Set $R = T_\n$.
Then $G(R) \cong G(A)[X_1^*, \ldots, X_m^*]$.

\begin{example} \label{dance}
 Let $C = k[X,Y,Z]/( X^2, XY, Y^2, (X,Y,Z)^3)$. Then $C$ is Artin local and $h_C(z) = 1 + 3z + 3z^2$. Using \ref{const} we can construct for every $m \geq 1$,  \CM \ local rings $A_m$ of dimension $m$ with $h = 3$, $e= h + 4 = 7$ and $h_A(z) = h_C(z)$. Also $G(A_m)$ is  \CM.
\end{example}

\begin{example} \label{e2d0}
The following example of Wang, see \cite[3.10]{CPR},  is of a 2-dimensional \CM \ local ring with $e_2 = e_1 - e + 1 = 3$ and $\depth G(A)  = 0$.
$$A = k[[ x, y, z, u, v]]/(z^2, zu, zv, uv, yz - u^3,  xz-v^3).$$ 
We have $h_A(z) = 1 + 3z + 3z^3 - z^4$. 
Using \ref{const} we may construct \CM \ local rings $A_m$ of dimension $m + 2$
with $\depth G(A_m) = m$.
\end{example}

We note that Examples \ref{dance} and \ref{e2d0} yield \CM \ local rings with Hilbert functions as described in Theorem \ref{e2}.

\begin{example}\label{sally1}
By \cite[4.4]{S4}
\[
R = k[[ t^6, t^7, t^{11}, t^{15}, t^{16}]]
\]
has multiplicity $6$, embedding dimension $5$ and reduction number $4$. So $h_R(z) = 1 + 4z + z^4$. Using \ref{const} we may construct 
\CM \ local rings $A_m$ of dimension $m + 1$ with $\depth G(A_m) = m$ and $h_{A_m}(z) = h_R(z)$.
\end{example}

\begin{example}\label{sally2}
By \cite{S3} we get that if $e = h + 3$ and $A$ is Gorenstein then $G(A)$ is \CM \ and $h_A(z) = 1 + hz + z^2 + z^3$.
By \cite[p.\ 97]{S3};  $R = k[[X,Y,Z]]/(XZ-YZ,  XZ + Y^3 - Z^2)$  is Gorenstein of dimension one, multiplicity 5 and $h = 2$. So $h_R(z) = 1 + 2z + z^2 + z^3$.  Using \ref{const} we may construct 
\CM \ local rings $A_m$ of dimension $m + 1$ with $G(A_m)$ \CM \   and $h_{A_m}(z) = h_R(z)$.
\end{example}

\begin{example}\label{tony}
Let $(A,\m)$ be the two dimensional local ring as in \ref{e2d0}. We assume $k$ is infinite. Let $u$ be an $A$-superficial. Set $B = A/(u)$. Then by
\ref{dim2}(2)(f) we get that  $h_B(z) = 1 + 3z + z^2 + z^3$ and $\depth G(B) = 0$.  Using \ref{const} we may construct 
\CM \ local rings $A_m$ of dimension $m + 1$ with $\depth G(A_m) = m$    and $h_{A_m}(z) = h_B(z)$.
\end{example}
 
We note that by the examples constructed  yield \CM \ local rings with Hilbert functions as described in Theorem \ref{e1}. Note we have also constructed \CM \ local rings $A$ with $h_A(z) = 1 + hz + z^2 + z^3$ such that $G(A)$ is \CM \ (see \ref{sally2}) and when $\depth G(A) = d - 1$ (see \ref{tony}).

\end{document}